\newtheorem{theorem}{Theorem}[section]
\begin{document}

\title{Generalized Bernoulli numbers and  a formula of Lucas}

\author{Victor H. Moll}
\address{Department of Mathematics,
Tulane University, New Orleans, LA 70118}
\email{vhm@tulane.edu}

\author{Christophe Vignat}
\address{Department of Mathematics,
Tulane University, New Orleans, LA 70118}
\email{cvignat@math.tulane.edu}

\subjclass{Primary 11B68, Secondary 33C45}

\date{\today}

\keywords{Generalized Bernoulli numbers, Meixner-Pollaczek polynomials, Dilcher identities, 
N\"{o}rlund polynomials}

\begin{abstract}
An overlooked formula of E.~Lucas for the generalized Bernoulli numbers is proved using generating 
functions. This is then used to provide a new proof and a new form of a sum involving 
classical Bernoulli numbers studied by 
K.~Dilcher.  The value of this sum is then given in terms of the  Meixner-Pollaczek polynomials. 
\end{abstract}

\maketitle

\newcommand{\nn}{\nonumber}
\newcommand{\ba}{\begin{eqnarray}}
\newcommand{\ea}{\end{eqnarray}}
\newcommand{\no}{\noindent}
\newcommand{\realpart}{\mathop{\rm Re}\nolimits}
\newcommand{\imagpart}{\mathop{\rm Im}\nolimits}
\newcommand{\pFq}[5]{\ensuremath{{}_{#1}F_{#2} \left( \genfrac{}{}{0pt}{}{#3}
{#4} \bigg| {#5} \right)}}

\newtheorem{Definition}{\bf Definition}[section]
\newtheorem{Thm}[Definition]{\bf Theorem}
\newtheorem{Example}[Definition]{\bf Example}
\newtheorem{Lem}[Definition]{\bf Lemma}
\newtheorem{Note}[Definition]{\bf Note}
\newtheorem{Cor}[Definition]{\bf Corollary}
\newtheorem{Conj}[Definition]{\bf Conjecture}
\newtheorem{Prop}[Definition]{\bf Proposition}
\newtheorem{Problem}[Definition]{\bf Problem}
\numberwithin{equation}{section}

\section{Introduction} \label{sec-intro}
\setcounter{equation}{0}

The goal of this paper is to provide a unified approach to two topics 
that have appeared in the literature. The first one is an expression for 
the generalized Bernoulli numbers $B_{n}^{(p)}$ defined by the 
exponential generating function 
\begin{equation}
\sum_{n=0}^{\infty} B_{n}^{(p)} \frac{z^{n}}{n!} = \left( \frac{z}{e^{z}-1} \right)^{p}.
\label{gen-ber}
\end{equation}
\noindent
For $n \in \mathbb{N}$, the coefficients $B_{n}^{(p)}$ are polynomials in $p$ named after 
 N\"{o}rlund in \cite{carlitz-1960a}.  The first few are 
 \begin{equation}
 B_{0}^{(p)} = 1, \, B_{1}^{(p)} = -\tfrac{1}{2}p, \, B_{2}^{(p)} = -\tfrac{1}{12}p + 
 \tfrac{1}{4}p^{2}, \, B_{3}^{(p)} = \tfrac{1}{8}p^{2}(1-p).
 \end{equation}
 \noindent
 In his $1878$ paper E.~Lucas \cite{lucase-1878a} gave the  formula 
 \begin{equation}
 B_{n}^{(p)} = \frac{(-1)^{p-1}}{(p-1)!} \frac{n!}{(n-p)!} \beta^{n-p+1} 
 (1+\beta) \cdots (p-1+\beta)
 \label{lucas-orig}
 \end{equation}
 \noindent
 for $n \geq p$. This is a symbolic formula: to obtain the 
 value of $B_{n}^{(p)}$, expand the expression \eqref{lucas-orig} 
 and replace $\beta^{j}$ by the ratio $B_{j}/j$. Here $B_{j}$ is the classical Bernoulli number 
 $B_{n} = B_{n}^{(1)}$ in the notation from \eqref{gen-ber}.
 
The second topic is an expression established by 
K. Dilcher \cite{dilcher-1996a} for the sums of products of Bernoulli numbers
\begin{equation}
S_{N}(n) := \sum \binom{2n}{2j_{1}, \, 2j_{2}, \, \cdots, 2j_{N}} 
B_{2j_{1}}B_{2j_{2}} \cdots B_{2j_{N}},
\label{dilcher-sum}
\end{equation}
\noindent
where the sum is taken over all nonnegative integers $j_{1}, \cdots, j_{N}$ 
such that $j_{1}+ \cdots + j_{N} = n$, and where 
\begin{equation}
\binom{2n}{2j_{1}, \, 2j_{2}, \, \cdots, 2j_{N}}  = 
\frac{(2n)!}{(2j_{1})! \cdots (2j_{N})!} 
\end{equation}
\noindent
is the multinomial coefficient and $B_{2k}$ is the classical Bernoulli number.
One of the  main results of \cite{dilcher-1996a} is the evaluation
\begin{equation}
S_{N}(n) = \frac{(2n)!}{(2n-N)!} 
\sum_{k=0}^{\lfloor{ (N-1)/2 \rfloor}} b_{k}^{(N)} \frac{B_{2n-2k}}{2n-2k},
\label{dilcher-coeff}
\end{equation}
\noindent
where the coefficients $b_{k}^{(N)}$ are defined by the recurrence 
\begin{equation}
b_{k}^{(N+1)} = - \frac{1}{N} b_{k}^{(N)} + \frac{1}{4} b_{k-1}^{(N-1)},
\label{dilcher-rec}
\end{equation}
\noindent
with $b_{0}^{(1)} = 1$ and $b_{k}^{(N)} = 0$ for $k  < 0$ and for 
$k > \lfloor (N-1)/2 \rfloor$. 

Lucas's  original proof is recalled in Section \ref{sec-lucas}.  This section 
also contains an extension of Lucas's formula for $B_{n}^{(p)}$ to $0 \leq n \leq p-1$ in terms 
of the Stirling numbers of the first kind. A unified proof of the two formulas 
for $B_{n}^{(p)}$ based on generating functions is given in Section \ref{sec-generating}. 
Another proof of Lucas's formula, based on recurrences, is given in Section \ref{sec-recurrences}
and Section \ref{sec-dilcher} contains a proof of 
\begin{equation}
S_{N}(n) = \sum_{k=0}^{N} \frac{(2n)!}{(2n-k)!} 2^{-k} \binom{N}{k} B_{2n-k}^{(N-k)}
\end{equation}
\noindent
that expresses Dilcher's sum \eqref{dilcher-sum} explicitly in terms of the generalized 
Bernoulli numbers. Expressing this result in hypergeometric form leads to a formula for 
$S_{N}(n)$ in terms of the Meixner-Pollaczek polynomials 
\begin{equation}
P_{n}^{(\lambda)}(x;\phi) = \frac{(2 \lambda)_{n}}{n!} e^{\imath n \phi} 
\pFq21{-n \quad \lambda + \imath x}{2 \lambda}{1 - e^{-2 \imath \phi}}. 
\end{equation}
\noindent
It is then established that the recurrence \eqref{dilcher-rec}, provided by Dilcher 
in \cite{dilcher-1996a}, is 
equivalent to the classical three-term relation for this orthogonal family of polynomials.

\section{Lucas's  theorem}
\label{sec-lucas}
\setcounter{equation}{0}

In his paper \cite{lucase-1878a}, E.~Lucas gave an expression for the generalized 
Bernoulli numbers $B_{n}^{(p)}$, for $n \geq p$. This section presents an outline of his proof and an 
extension of this 
expression for $B_{n}^{(p)}$ to the case $0 \leq n \leq p-1$. A proof based 
on generating functions is given in the next section.  Lucas's formula uses the translation 
\begin{equation}
\beta^{n} = \frac{B_{n}}{n}
\label{symb}
\end{equation}
\noindent 
coming from umbral calculus. Observe, for example,  that 
\begin{eqnarray*}
B_{3}^{(2)} & = & \frac{(-1)^{1}}{1!} \frac{3!}{1!} \beta^{2}(1+ \beta) = - 6(\beta^{2} + \beta^{3} ) \\
 & = & -6 \left( \frac{B_{2}}{2} + \frac{B_{3}}{3} \right) = -3B_{2} = - \frac{1}{2}
 \end{eqnarray*}
 \noindent
 Observe also that the symbolic substitution \eqref{symb} should be performed only \textit{after} all the 
 terms have been expanded. For example, 
 \begin{equation}
 \beta^{2}( 1+ \beta) = \beta^{2} + \beta^{3} = \frac{B_{2}}{2} + \frac{B_{3}}{3} = - \frac{1}{4}
 \end{equation}
 \noindent
 but 
 \begin{equation}
 \beta^{2}(1+ \beta) \neq \frac{B_{2}}{2} \left( 1 + \frac{B_{1}}{1} \right) = \frac{1}{24}.
 \end{equation}

\begin{theorem}[Lucas]
\label{thm-Lucas}
For $n \geq p$, the generalized Bernoulli numbers $B_{n}^{(p)}$ are given by 
\begin{equation}
B_{n}^{(p)} = \frac{(-1)^{p-1}}{(p-1)!} \frac{n!}{(n-p)!} \beta^{n-p+1} 
(1+\beta) (2 + \beta) \cdots (p-1+\beta)
\label{lucas-0}
\end{equation}
\noindent
where, in symbolic notation, 
\begin{equation}
\beta^{n} = \frac{B_{n}}{n}.
\end{equation}
\end{theorem}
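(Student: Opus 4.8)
The plan is to prove Lucas's formula by comparing exponential generating functions in $z$, recasting the umbral substitution $\beta^{j}\mapsto B_{j}/j$ as a linear functional. I introduce the linear map $L$ on polynomials in $\beta$ determined by $L[\beta^{j}]=B_{j}/j$ for $j\geq 1$; since the lowest power of $\beta$ occurring in \eqref{lucas-0} is $\beta^{n-p+1}$ with $n-p+1\geq 1$, the value $L[1]$ never enters. Denoting the right-hand side of \eqref{lucas-0} by $R_{p}(n)$, the first step is to assemble $\sum_{n\geq p}R_{p}(n)z^{n}/n!$. After the reindexing $m=n-p$ and recognizing that $\beta\,(1+\beta)(2+\beta)\cdots(p-1+\beta)$ is the rising factorial $(\beta)_{p}=\beta(\beta+1)\cdots(\beta+p-1)$, this collapses to
\[
\sum_{n\geq p}R_{p}(n)\frac{z^{n}}{n!}=\frac{(-1)^{p-1}}{(p-1)!}\,z^{p}\,L\!\left[(\beta)_{p}\,e^{\beta z}\right],
\]
where $e^{\beta z}=\sum_{m\geq 0}\beta^{m}z^{m}/m!$ is the umbral exponential.

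The second step converts the umbral factor into a differential operator. Since $\beta\,e^{\beta z}=D\,e^{\beta z}$ with $D=d/dz$, the rising factorial in $\beta$ becomes the same rising factorial in $D$, so that $L[(\beta)_{p}e^{\beta z}]=\prod_{k=0}^{p-1}(D+k)\,\Phi(z)$ with $\Phi(z)=L[e^{\beta z}]$. I then evaluate $\Phi$ in closed form: writing $1/(e^{t}-1)=\frac{d}{dt}\log(e^{t}-1)-1$ and integrating the classical series termwise gives
\[
\Phi(z)=\sum_{n\geq 1}\frac{B_{n}}{n}\frac{z^{n}}{n!}=\log\frac{e^{z}-1}{z}-z,
\]
where the isolated $-z$ comes from the $n=1$ term $B_{1}=-\tfrac12$ and will turn out to be essential.

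To apply the operator I use the shift identity $\prod_{k=0}^{p-1}(D+k)=p!\,[t^{p}]\,(1-t)^{-D}$ together with $(1-t)^{-D}g(z)=g\bigl(z-\log(1-t)\bigr)$, which reduces everything to a coefficient of $t^{p}$ in $\Phi\bigl(z-\log(1-t)\bigr)$. Using $e^{z-\log(1-t)}=e^{z}/(1-t)$ one finds
\[
\Phi\bigl(z-\log(1-t)\bigr)=\log\bigl(e^{z}-1+t\bigr)-\log\bigl(z-\log(1-t)\bigr)-z,
\]
the two stray $\log(1-t)$ contributions—one from expanding the logarithm, one from the $-z$ in $\Phi$—having cancelled. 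The term $\log(e^{z}-1+t)$ gives $[t^{p}]=(-1)^{p-1}/\bigl(p(e^{z}-1)^{p}\bigr)$, which after the prefactor $\frac{(-1)^{p-1}}{(p-1)!}z^{p}\cdot p!$ reproduces exactly $\bigl(z/(e^{z}-1)\bigr)^{p}$, while $-\log\bigl(z-\log(1-t)\bigr)$ contributes, after the same prefactor, only powers $z^{p-k}$ with $1\leq k\leq p$, that is, a polynomial in $z$ of degree at most $p-1$.

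The final step is coefficient matching. The left-hand series has no terms of degree below $p$, while the right-hand side equals $\bigl(z/(e^{z}-1)\bigr)^{p}$ plus a polynomial of degree $<p$; comparing coefficients of $z^{n}$ for $n\geq p$, where that polynomial is invisible, yields $R_{p}(n)=B_{n}^{(p)}$ and proves \eqref{lucas-0}. I expect the main obstacle to be exactly this low-order bookkeeping: one must carry the $-z$ in $\Phi$ and check that the $\log(1-t)$ it generates cancels the one coming from $\log\bigl(e^{z}-1+t\bigr)$, so that precisely the clean generating function survives in degrees $\geq p$ and no spurious $z^{p}$ term is left behind. Making the umbral steps rigorous as identities of formal power series is routine once $L$ is fixed as a linear functional.
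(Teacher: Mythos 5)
Your proof is correct, and while it belongs to the same generating-function family as the paper's complete proof (Section \ref{sec-generating}), it differs in both places where the real work happens. First, the paper cannot treat the range $n\geq p$ in isolation: it must first establish the companion formula \eqref{ber-small} for $0\leq n\leq p-1$ (Theorem \ref{the-stir1}), split $G=G_{1}+G_{2}$, and use the low-index part $G_{1}$ to cancel the polynomial piece of $G_{2}$, so that only the combined series equals $\left(z/(e^{z}-1)\right)^{p}$. You avoid the companion formula altogether: summing only over $n\geq p$, you show the residual term $-\log\bigl(z-\log(1-t)\bigr)$ contributes a polynomial in $z$ of degree at most $p-1$, invisible when coefficients of $z^{n}$ with $n\geq p$ are compared. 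This makes your argument self-contained for the stated theorem; the paper's extra bookkeeping buys the bonus that \eqref{ber-small} is proved simultaneously. Second, the operator mechanics differ: the paper expands the umbral product through Stirling numbers of the first kind, works with the auxiliary functions $f_{k}$ and their antiderivatives $g_{k}$, and invokes a lemma evaluating $(1+D)\cdots(n+D)\,(e^{z}-1)^{-1}$, whereas you keep the rising factorial $(\beta)_{p}$ intact, convert it to $(D)_{p}$ acting on the closed form $\Phi(z)=L[e^{\beta z}]=\log\frac{e^{z}-1}{z}-z$, and evaluate via the shift identity $(1-t)^{-D}g(z)=g\bigl(z-\log(1-t)\bigr)$ with extraction of $[t^{p}]$. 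The two devices are equivalent in substance---since $D\Phi=(e^{z}-1)^{-1}-z^{-1}$, your evaluation is the paper's lemma applied to $(e^{z}-1)^{-1}$ together with explicit tracking of the $z^{-1}$ part---but your packaging of that tracking is cleaner. Two small points should be stated to make it fully rigorous: the $[t^{p}]$ manipulations take place in $\mathbb{Q}((z))[[t]]$, since pieces like $\log\bigl(1+t/(e^{z}-1)\bigr)$ have Laurent (not power series) coefficients in $z$; and $L[e^{\beta z}]$ nominally involves the undefined $L[1]$, which is harmless because the factor $D$ in $(D)_{p}$ annihilates constants (or, equivalently, apply the operator to $e^{\beta z}-1$ from the start).
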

\begin{proof}
Lucas's argument begins with the identity 
\begin{equation}
pB_{n}^{(p+1)} = (p-n) B_{n}^{(p)} - pn B_{n-1}^{(p)}
\label{lucas-1}
\end{equation}
\noindent
which follows directly from the identity for generating functions
\begin{equation}
x \frac{d}{dx} \left( \frac{x}{e^{x} - 1 } \right)^{p} = p (1-x) 
\left( \frac{x}{e^{x}-1} \right)^{p} - p 
\left( \frac{x}{e^{x}-1} \right)^{p+1}.
\end{equation}
\noindent
Shifting $n$ to $n-1$ it follows that
\begin{equation}
pB_{n-1}^{(p+1)} = (p-n+1) B_{n-1}^{(p)} - p(n-1) B_{n-2}^{(p)}.
\label{lucas-2}
\end{equation}
\noindent
Now multiplying  \eqref{lucas-1} by $n(p+1)$ and \eqref{lucas-2} by 
$(p-n+1)$ leads to 
\begin{eqnarray*}
p(p+1)B_{n}^{(p+2)} & = &  (p-n+1)(p-n)B_{n}^{(p)} - (p-n+1)(p+p+1)nB_{n-1}^{(p)} \\
& & + p(p+1)n(n-1)B_{n-2}^{(p)}
\end{eqnarray*}
\noindent
and then, by the same methods, he produces
\begin{eqnarray*}
(p+2)(p+1)pB_{n}^{(p+3)} & = & (p-n+2)(p-n+1)(p-n)B_{n}^{(p)} \\
& - & (p-n_2)(p-n+1)(p+p+1+p+2) nB_{n-1}^{(p)} \\
& + & (p-n+2)(p(p+1)+p(p+2) 
+  (p+1)(p+2))n(n-1)B_{n-2}^{(p)} \\
& - & p(p+1)(p+2)n(n-1)(n-2)B_{n-3}^{(p)}
\end{eqnarray*}
\noindent
and then, stating `\textit{and so on}', concludes the proof.
\end{proof}

The  following alternate proof of Lucas's  theorem using generating functions requires an expression 
for $B_{n}^{(p)}$ in the range $0 \leq n \leq p-1$, of the kind given in \eqref{lucas-0}. This cannot be obtained directly from 
\eqref{lucas-0}. The Stirling numbers of the first kind $s_{k}^{(p)}$ are used to produce an equivalent 
formulation of $B_{n}^{(p)}$. These numbers are defined  by the generating function
\begin{equation}
z(z-1)(z-2) \cdots (z-(p-1)) = \sum_{k=1}^{p} s_{k}^{(p)} z^{k}.
\label{gen-stirling}
\end{equation}
Then \eqref{lucas-0} may be written as 
\begin{eqnarray*}
B_{n}^{(p)} & = & \frac{(-1)^{p-1}}{(p-1)!} n(n-1)\cdots (n-(p-1)) \beta^{n-p} 
(-1)^{p} \sum_{k=1}^{p} s_{k}^{(p)} (- \beta)^{k} \\
& = & - \frac{1}{(p-1)!} n(n-1) \cdots (n-(p-1)) \sum_{k=1}^{p} s_{k}^{(p)} 
(-1)^{k} \frac{B_{n-p+k}}{n-p+k}.
\end{eqnarray*}
\noindent 
Observe that the index $n$ varies in the range $0 \leq n \leq p-1$, therefore 
the prefactor $n(n-1)\cdots (n-(p-1))$ always vanishes. On the other hand, 
all the summands are finite, except when $k = p-n$.  Performing the 
translation from $(-\beta)^{k}$ to $B_{k}/k$ for this specific index gives
\begin{equation*}
-\frac{1}{(p-1)!} n(n-1)\cdots 1 \times (-1)(-2) \cdots (-(p-1-n))) s_{p-n}^{(p)} (-1)^{p-n} =
\frac{s_{p-n}^{(p)}}{\binom{p-1}{n}}.
\end{equation*}

This gives:

\begin{theorem}
\label{the-stir1}
The generalized Bernoulli numbers $B_{n}^{(p)}$, with $0 \leq n \leq p-1$ are given by
\begin{equation}
B_{n}^{(p)} = \frac{s_{p-n}^{(p)}}{\binom{p-1}{n}}.
\label{ber-small}
\end{equation}
\end{theorem}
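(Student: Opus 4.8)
The plan is to recast Lucas's symbolic formula \eqref{lucas-0} so that its entire dependence on $n$ sits in a single polynomial prefactor, and then to let $n$ descend below $p$. First I would write the product of linear factors as $\beta^{n-p+1}(1+\beta)(2+\beta)\cdots(p-1+\beta)=\beta^{n-p}\prod_{j=0}^{p-1}(\beta+j)$ and expand $\prod_{j=0}^{p-1}(\beta+j)$ by means of the Stirling generating function \eqref{gen-stirling} evaluated at $z=-\beta$. Collecting the resulting signs and applying the umbral rule \eqref{symb} term by term turns \eqref{lucas-0} into the bridging identity
\[
B_{n}^{(p)}=-\frac{1}{(p-1)!}\prod_{j=0}^{p-1}(n-j)\sum_{k=1}^{p}s_{k}^{(p)}(-1)^{k}\frac{B_{n-p+k}}{n-p+k},
\]
in which the ratio $\tfrac{n!}{(n-p)!}$ is interpreted as the polynomial $\prod_{j=0}^{p-1}(n-j)$.

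For $0\le n\le p-1$ this prefactor vanishes, because its factor with $j=n$ is $n-n=0$. Every summand with a finite coefficient is therefore annihilated, and the only way a nonzero value can survive is through a compensating singularity. That singularity occurs for the single index $k=p-n$, where the denominator $n-p+k$ is $0$ and $B_{0}=1$, producing a $0\cdot\infty$ indeterminacy. I would resolve it by isolating the vanishing factor $(n-n)$ of the prefactor and cancelling it against the vanishing denominator $n-p+k$ of this one term; what is left is the product of the remaining factors, multiplied by $B_{0}=1$ and by the Stirling coefficient $(-1)^{p-n}s_{p-n}^{(p)}$.

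The remaining work is a routine evaluation: the product of the prefactor's surviving factors equals $n!\,(-1)^{p-1-n}(p-1-n)!$, and a short sign count shows that the two sign contributions combine to $(-1)^{2p-2n-1}=-1$, which cancels the leading minus sign. Collecting the factorials against $1/(p-1)!$ produces $\tfrac{n!\,(p-1-n)!}{(p-1)!}\,s_{p-n}^{(p)}$, and recognizing the reciprocal binomial coefficient gives \eqref{ber-small}. The main obstacle is the middle step: since \eqref{lucas-0} is established only for $n\ge p$, one must justify that the $0\cdot\infty$ cancellation is legitimate, i.e.\ that the bridging identity persists as an analytic identity in $n$ across $n=p$ and that the apparent pole of $\prod_{j}(n-j)/(n-p+k)$ at $k=p-n$ is genuinely removable. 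I expect this to be the delicate point, with the generating-function argument announced for the next section furnishing the independent confirmation that the extension is the correct one.
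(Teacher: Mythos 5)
Your proposal follows essentially the same route as the paper's own derivation: the paper likewise rewrites \eqref{lucas-0} via the Stirling generating function \eqref{gen-stirling} evaluated at $-\beta$, observes that for $0\leq n\leq p-1$ the prefactor $n(n-1)\cdots(n-(p-1))$ vanishes while only the term $k=p-n$ carries a compensating singular denominator, and resolves that $0/0$ cancellation with the same sign and factorial bookkeeping to reach $s_{p-n}^{(p)}/\binom{p-1}{n}$. Even your closing caveat matches the paper's stance, since the authors concede the result ``cannot be obtained directly'' from \eqref{lucas-0} and let the generating-function argument of Section~\ref{sec-generating} (together with the classical identity from N\"{o}rlund) supply the rigorous confirmation.
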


In fact, this is a classical result. It is, for example, a direct consequence of the identity
\begin{equation}
(z-1)(z-2) \cdots (z-p) = \sum_{\ell = 0}^{p} \binom{p}{\ell} z^{\ell} B_{p- \ell}^{(p+1)} 
\end{equation}
\noindent
which appears (unnumbered) in \cite[p.149]{norlund-1924a}.  

\section{The proof via generating function}
\label{sec-generating}
\setcounter{equation}{0}

The expressions for the generalized Bernoulli numbers given in \eqref{lucas-0} and \eqref{ber-small}  are now
used to compute the generating function
\begin{equation}
G(z) = \sum_{n=0}^{\infty} B_{n}^{(p)} \frac{z^{n}}{n!}
\end{equation}
\noindent
and to show that it coincides with the generating function of the generalized Bernoulli numbers \eqref{gen-ber}.

Split the sum as $G(z) = G_{1}(z) + G_{2}(z)$, where 
\begin{equation}
G_{1}(z) = \sum_{n=0}^{p-1} B_{n}^{(p)} \frac{z^{n}}{n!} \text{ and }
G_{2}(z) = \sum_{n=p}^{\infty} B_{n}^{(p)} \frac{z^{n}}{n!}.
\end{equation}

Observe that 
\begin{eqnarray*}
G_{2}(z) & = & \sum_{n=p}^{\infty} \frac{(-1)^{p-1}}{(p-1)!} \frac{n!}{(n-p)!} 
\beta^{n-p+1} (1+\beta) \cdots ( (p-1) + \beta) \frac{z^{n}}{n!} \\
& = & \frac{(-1)^{p-1}}{(p-1)!} \beta (1+ \beta) \cdots (p-1+ \beta) \sum_{n=p}^{\infty} 
\frac{n!}{(n-p)!} \beta^{n-p} \frac{z^{n}}{n!}  \\
& = & \frac{(-1)^{p-1}}{(p-1)!} (-1)^{p} \sum_{k=1}^{p} s_{k}^{(p)} (-1)^{k} z^{p} f_{k}(z) 
\end{eqnarray*}
\noindent
with 
\begin{equation}
f_{k}(z) = \sum_{n=p}^{\infty} \frac{B_{n-p+k}}{(n-p)! (n-p+k)} z^{n-p}.
\end{equation}
\noindent 
The $(k-1)$-st antiderivative of $f_{k}(z)$, denoted by $g_{k}(z)$, is 
\begin{eqnarray*}
g_{k}(z)  & = & \sum_{n=p}^{\infty} \frac{B_{n-p+k}}{(n-p+k)!} z^{n-p+k-1} \\
& = & z^{-1} \sum_{\ell = k}^{\infty} \frac{B_{\ell}}{\ell!} z^{\ell} \\
& = & \frac{1}{z} \left[  \frac{z}{e^{z} - 1 } - \sum_{\ell = 0}^{k-1} \frac{B_{\ell}}{\ell !} z^{\ell} \right],
\end{eqnarray*}
\noindent
therefore
\begin{eqnarray*}
f_{k}(z) & = & \left( \frac{d}{dz} \right)^{k-1} \frac{1}{e^{z}-1} - \left( \frac{d}{dz} \right)^{k-1} \frac{1}{z} \\
            & = & \left( \frac{d}{dz} \right)^{k-1} \frac{1}{e^{z}-1} + \frac{(-1)^{k} (k-1)!}{z^{k}}.
\end{eqnarray*}

This gives
\begin{eqnarray*}
G_{2}(z) & = & - \frac{z^{p}}{(p-1)!} \sum_{k=1}^{p} s_{k}^{(p)} (-1)^{k} f_{k}(z) \\
& = & - \frac{z^{p}}{(p-1)!} \sum_{k=1}^{p} s_{k}^{(p)} (-1)^{k} 
\left( \frac{d}{dz} \right)^{k-1} \left[ \frac{1}{e^{z}-1} \right] - 
\frac{z^{p}}{(p-1)!} \sum_{k=1}^{p} s_{k}^{(p)} \frac{(k-1)!}{z^{k}}.
\end{eqnarray*}

On the other hand,
\begin{eqnarray*}
G_{1}(z) & = & \sum_{n=0}^{p-1} B_{n}^{(p)} \frac{z^{n}}{n!} \\
& = & \sum_{n=0}^{p-1} \frac{s_{p-n}^{(p)}}{\binom{p-1}{n}} \frac{z^{n}}{n!} \\
& = & \frac{1}{(p-1)!} \sum_{n=0}^{p-1} s_{p-n}^{(p)} (p-1-n)! z^{n} \\
& = & \frac{1}{(p-1)!} \sum_{k=1}^{p} s_{k}^{(p)} (k-1)! z^{p-k}.
\end{eqnarray*}
\noindent
This sum cancels the second term in the expression for $G_{2}(z)$. Hence 
\begin{equation}
G(z) = G_{1}(z) + G_{2}(z) = - \frac{z^{p}}{(p-1)!} \sum_{k=1}^{p} s_{k}^{(p)} (-1)^{k} 
\left( \frac{d}{dz} \right)^{k-1} \left[ \frac{1}{e^{z} -1 } \right].
\end{equation}

Using \eqref{gen-stirling} this gives 
\begin{equation}
G(z) = - \frac{(-z)^{p}}{(p-1)!} \left( (p-1) + \frac{d}{dz} \right) \cdots \left( 1 + \frac{d}{dz} \right) 
\left[ \frac{1}{e^{z}-1} \right]. \label{form-G}
\end{equation}

The next lemma simplifies this expression. Its proof by induction is elementary, so it is omitted.

\begin{Lem}
For $n \geq 1$, the identity
\begin{equation}
\frac{(-1)^{n}}{n!} \left( n + \frac{d}{dz} \right) \left( n - 1 + \frac{d}{dz} \right) \cdots 
\left( 1 + \frac{d}{dz} \right) \frac{1}{e^{z}-1} = \frac{1}{(e^{z}-1)^{n+1}}
\end{equation}
\noindent
holds.
\end{Lem}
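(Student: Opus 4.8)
The plan is to prove the identity by induction on $n$, exploiting the fact that the shifted derivatives on the left telescope against the powers of $(e^{z}-1)^{-1}$ on the right. After isolating one elementary differentiation step, the whole argument collapses into routine bookkeeping of signs and factorials, so there is no genuine obstacle; the only thing to be careful about is the order in which the operators act.

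First I would record the single computation that drives everything. For any integer $m \geq 1$, using $\frac{d}{dz}(e^{z}-1)^{-m} = -m\,e^{z}(e^{z}-1)^{-m-1}$, one has
\[
\left( m + \frac{d}{dz} \right) \frac{1}{(e^{z}-1)^{m}} = \frac{m}{(e^{z}-1)^{m}} - \frac{m\,e^{z}}{(e^{z}-1)^{m+1}} = m \cdot \frac{(e^{z}-1) - e^{z}}{(e^{z}-1)^{m+1}} = -\frac{m}{(e^{z}-1)^{m+1}}.
\]
Thus applying the operator $m + \frac{d}{dz}$ raises the power of $(e^{z}-1)^{-1}$ by one while contributing the factor $-m$. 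The base case $n=1$ is the instance $m=1$: the left side of the lemma equals $\frac{-1}{1!}\left(1+\frac{d}{dz}\right)(e^{z}-1)^{-1} = -\bigl(-(e^{z}-1)^{-2}\bigr) = (e^{z}-1)^{-2}$, as required.

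For the inductive step, write $L_{n}$ for the operator $\left(n+\frac{d}{dz}\right)\cdots\left(1+\frac{d}{dz}\right)$ and suppose the identity holds at level $n$, which is to say $L_{n}\,(e^{z}-1)^{-1} = (-1)^{n}\,n!\,(e^{z}-1)^{-(n+1)}$. Since $L_{n+1} = \left(n+1+\frac{d}{dz}\right)L_{n}$, substituting the hypothesis before differentiating gives
\[
L_{n+1}\frac{1}{e^{z}-1} = (-1)^{n}\,n! \left( n+1+\frac{d}{dz} \right)\frac{1}{(e^{z}-1)^{n+1}} = (-1)^{n}\,n! \cdot \bigl(-(n+1)\bigr)\frac{1}{(e^{z}-1)^{n+2}},
\]
where the key computation was invoked with $m=n+1$. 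The right-hand side equals $(-1)^{n+1}(n+1)!\,(e^{z}-1)^{-(n+2)}$, and multiplying by $(-1)^{n+1}/(n+1)!$ yields exactly $(e^{z}-1)^{-(n+2)}$, which is the identity at level $n+1$. This closes the induction. The one point that must be respected is that $n+1+\frac{d}{dz}$ acts on the outside of $L_{n}$, so the inductive hypothesis has to be inserted first; once that ordering is observed, the alternating signs and the factorials cancel automatically.
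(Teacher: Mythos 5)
Your proof is correct: the key computation $\left(m+\frac{d}{dz}\right)(e^{z}-1)^{-m} = -m(e^{z}-1)^{-m-1}$ is verified accurately, and the induction (base case, ordering of operators, sign and factorial bookkeeping) goes through exactly as you describe. The paper itself omits the proof, remarking only that it is an elementary induction, and your argument is precisely that induction, so you have supplied the intended proof.
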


Replacing in \eqref{form-G} produces 
\begin{equation}
G(z) = - \frac{(-z)^{p}}{(p-1)!} \frac{(p-1)!}{(-1)^{p-1}} \frac{1}{(e^{z}-1)^{p}} = 
\left( \frac{z}{e^{z}-1} \right)^{p},
\end{equation}
\noindent
which is the generating function of the generalized Bernoulli numbers.  This proves both
Lucas's formula for $B_{n}^{(p)}$ with $n \geq p$ and the expression \eqref{ber-small} 
for $0 \leq p \leq n-1$. 

\section{Lucas's formula via recurrences}
\label{sec-recurrences}
\setcounter{equation}{0}

The numbers $B_{n}^{(p)}$ satisfy the recurrence 
\begin{equation}
pB_{n}^{(p+1)} = (p-n) B_{n}^{(p)} - pn B_{n-1}^{(p)}.
\label{recu-newber1}
\end{equation}
Lucas's  formula for $B_{n}^{(p)}$ is now established by showing that the 
numbers defined by 
\eqref{lucas-0} satisfy the same recurrence.

Start with
\begin{multline*}
(p-n)B_{n}^{(p)} - pnB_{n-1}^{(p)}  =  (p-n) \frac{(-1)^{p-1} n!}{(p-1)!(n-p)!} \beta^{n-p} 
\prod_{k=0}^{p-1} (k+ \beta) -  \\
p n \frac{(-1)^{p-1} n!}{(p-1)!(n-p-1)!} \beta^{n-1-p} 
\prod_{k=0}^{p-1} (k+ \beta),
\end{multline*}
\noindent 
and write it as 
\begin{eqnarray*}
(p-n)B_{n}^{(p)} - pnB_{n-1}^{(p)}  & = & 
\frac{(-1)^{p-1}n!}{(p-1)! (n-p-1)!} \beta^{n-1-p} 
\left[ - \prod_{k=0}^{p-1} (k + \beta) - p \beta \prod_{k=0}^{p-1} (k + \beta) \right] \\
& = & \frac{(-1)^{p}n!}{(p-1)! (n-p-1)!} \beta^{n-1-p} (p + \beta) \prod_{k=0}^{p-1} (k+ \beta) \\
& = & p \frac{(-1)^{p}}{p!} \frac{n!}{(n-p-1)!} \beta^{n-1-p} \prod_{k=0}^{p} (k + \beta) \\
& = & pB_{n}^{(p+1)}.
\end{eqnarray*}

To conclude the result, it suffices to check that the initial conditions match. This is 
clear, since
\begin{equation}
B_{n}^{(1)} = \frac{n!}{(n-1)!} \beta^{n} = n \beta^{n} = n \frac{B_{n}}{n} = B_{n}.
\end{equation}
This establishes  Lucas's  formula for the generalized Bernoulli numbers. 

\section{A new proof of Dilcher's formula}
\label{sec-dilcher}
\setcounter{equation}{0}

This section analyzes the sum 
\begin{equation}
S_{N}(n) := \sum \binom{2n}{2j_{1}, \, 2j_{2}, \, \cdots, 2j_{N}} 
B_{2j_{1}}B_{2j_{2}} \cdots B_{2j_{N}},
\end{equation}
\noindent
using Lucas's  expression for the generalized Bernoulli numbers $B_{n}^{(p)}$.  
An alternative formulation is presented.

\begin{Prop}
\label{propos-1}
The sum $S_{N}(n)$ is given by 
\begin{equation}
S_{N}(n) = \sum_{k=0}^{N} \frac{(2n)!}{(2n-k)!} 2^{-k} \binom{N}{k} B_{2n-k}^{(N-k)}
\end{equation}
\noindent
for $2n> N$.
\end{Prop}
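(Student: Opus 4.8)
The plan is to prove the proposition by a direct generating-function computation, relying only on the defining series \eqref{gen-ber} for the generalized Bernoulli numbers rather than on the explicit Lucas form. First I would package the sum $S_{N}(n)$ into an exponential generating function. Since the classical Bernoulli numbers of odd index $\geq 3$ vanish while $B_{1} = -\tfrac{1}{2}$, adding $z/2$ removes the only odd term of $z/(e^{z}-1)$, so the even part is
\begin{equation}
E(z) := \sum_{j=0}^{\infty} B_{2j} \frac{z^{2j}}{(2j)!} = \frac{z}{e^{z}-1} + \frac{z}{2}.
\end{equation}
The multinomial structure of \eqref{dilcher-sum} then identifies $S_{N}(n)$ as a coefficient of the $N$-th power of $E$: expanding the product and collecting the terms with $j_{1}+\cdots+j_{N}=n$ gives
\begin{equation}
\sum_{n=0}^{\infty} S_{N}(n) \frac{z^{2n}}{(2n)!} = E(z)^{N},
\end{equation}
since the coefficient of $z^{2n}$ in $E(z)^N$ is exactly $\sum_{j_{1}+\cdots+j_{N}=n} \prod_{i} B_{2j_{i}}/(2j_{i})! = S_{N}(n)/(2n)!$.

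The genuinely useful observation is the closed form $E(z)=z/(e^{z}-1)+z/2$, which is what turns the $N$-th power into something the binomial theorem resolves into generalized Bernoulli generating functions. Expanding,
\begin{equation}
E(z)^{N} = \sum_{k=0}^{N} \binom{N}{k} \left( \frac{z}{2} \right)^{k} \left( \frac{z}{e^{z}-1} \right)^{N-k} = \sum_{k=0}^{N} \binom{N}{k} 2^{-k} z^{k} \left( \frac{z}{e^{z}-1} \right)^{N-k}.
\end{equation}
Inserting the defining expansion \eqref{gen-ber} for each factor, namely $(z/(e^{z}-1))^{N-k} = \sum_{m \geq 0} B_{m}^{(N-k)} z^{m}/m!$, yields
\begin{equation}
E(z)^{N} = \sum_{k=0}^{N} \sum_{m=0}^{\infty} \binom{N}{k} 2^{-k} B_{m}^{(N-k)} \frac{z^{m+k}}{m!}.
\end{equation}

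Finally I would extract the coefficient of $z^{2n}$ by setting $m+k=2n$, i.e. $m=2n-k$, and multiply by $(2n)!$ to recover $S_{N}(n)$; this produces precisely
\begin{equation}
S_{N}(n) = \sum_{k=0}^{N} \frac{(2n)!}{(2n-k)!} 2^{-k} \binom{N}{k} B_{2n-k}^{(N-k)}.
\end{equation}
The hypothesis $2n>N$ enters exactly here: because $0 \leq k \leq N < 2n$, the index $m=2n-k$ is always nonnegative, so every summand is a bona fide generalized Bernoulli number and no truncation or spurious contribution occurs. I expect the only real obstacle to be the clean justification of the two formal steps—recognizing $E(z)^{N}$ as the generating function of $S_{N}(n)$ through the convolution/multinomial identity, and the index-shift bookkeeping $m+k=2n$—both of which are routine for formal power series. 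Notably, the Lucas formula \eqref{lucas-0} is not needed for the proposition itself; it is invoked only afterward to re-express the $B_{2n-k}^{(N-k)}$ explicitly and to reach the Meixner--Pollaczek form.
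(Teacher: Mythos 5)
Your proof is correct, and it takes a genuinely different route from the paper. The paper argues umbrally: it writes $S_{N}(n) = 2^{-N}\left( \epsilon_{1}B_{1} + \cdots + \epsilon_{N}B_{N} \right)^{2n}$ as an average over sign patterns $\epsilon_{j} = \pm 1$, introduces the mixed-sign quantities $Y_{2n}^{(M,N)}$, strips off minus signs one at a time via the umbral reflection $f(-B) = f(B) + f'(0)$, solves the resulting recurrence, and finally collapses the double sum with $\sum_{M}\binom{M}{k}\binom{N}{M} = 2^{N-k}\binom{N}{k}$. Your argument is the generating-function transcription of the same underlying fact: your closed form $E(z) = z/(e^{z}-1) + z/2$ for the even part is exactly what the sign-averaging plus reflection identity encodes (note $-z/(e^{-z}-1) = z/(e^{z}-1) + z$), and the binomial theorem applied to $E(z)^{N}$ accomplishes in one stroke what the paper's recurrence-plus-binomial-sum does in several steps. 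What your route buys is economy and self-containedness: you need only the definition \eqref{gen-ber}, the vanishing of the odd-index Bernoulli numbers beyond $B_{1}$, and routine coefficient extraction, with the hypothesis $2n > N$ entering transparently to keep every index $2n-k$ nonnegative (indeed $2n \geq N$ would suffice for the formula as written). What the paper's route buys is the finer intermediate identity $Y_{2n}^{(M,N)} = \sum_{k=0}^{M}\binom{M}{k}\tfrac{(2n)!}{(2n-k)!}B_{2n-k}^{(N-k)}$, valid for an arbitrary mixture of signs, and it keeps the section inside the umbral framework on which the rest of the paper (Lucas's formula and Proposition \ref{propos-2}) is built. Your closing remark is also accurate: Lucas's formula plays no role in this proposition in the paper either; it enters only in the subsequent reduction toward the Meixner-Pollaczek form.
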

\begin{proof}
The umbral method \cite{roman-1984a} shows that the sum $S_{N}(n)$ is given by
\begin{equation}
S_{N}(n) = \frac{1}{2^{N}} \left( \epsilon_{1} B_{1} + \cdots + \epsilon_{N} B_{N} \right)^{2n}
\end{equation}
\noindent 
with $\epsilon_{j} = \pm 1$. Introduce the notation
\begin{equation}
Y_{2n}^{(M,N)} = \left( - B_{1} - \cdots - B_{M} + B_{M+1} + \cdots + B_{N} \right)^{2n}
\end{equation}
\noindent
where there are $M$ minus signs and $N-M$ plus signs. Thus, 
\begin{equation}
S_{N}(n) = \frac{1}{2^{N}} \sum_{M=0}^{N} \binom{N}{M} Y_{2n}^{(M,N)}.
\label{sum-1}
\end{equation}

The next step uses the famous umbral identity 
\begin{equation}
f(-B) = f(B) + f'(0)
\end{equation}
(see Section 2 of \cite{dixit-2014a} for details) to obtain
\begin{equation}
Y_{2n}^{(M,N)} = Y_{2n}^{(M-1,N)} + 2n Y_{2n-1}^{(M-1,N-1)}.
\end{equation}
\noindent
This may  be written as 
\begin{equation}
Q_{2n}^{(M)} = Q_{2n}^{(M-1)} + 2n Q_{2n-1}^{(M-1)},
\label{easy-red}
\end{equation}
\noindent
where $Q_{j}^{M} = Y_{j}^{(M,P+j)}$ and $P= N-2n$.  Then \eqref{easy-red}
is easily solved to produce 
\begin{equation}
Q_{2n}^{(M)} = \sum_{k=0}^{M} \binom{M}{k} \frac{(2n)!}{(2n-k)!} Q_{2n-k}^{(0)}. 
\end{equation}
\noindent
Since the initial condition is 
\begin{equation}
Q_{2n-k}^{(0)} =  Y_{2n-k}^{(0, N-k)} = B_{2n-k}^{(N-k)},
\end{equation}
\noindent
it follows that 
\begin{equation}
Y_{2n}^{(M,N)} = \sum_{k=0}^{M} \binom{M}{k} \frac{(2n)!}{(2n-k)!} B_{2n-k}^{(N-k)}.
\end{equation}
Replacing in \eqref{sum-1} yields 
\begin{eqnarray*}
S_{N}(n) & = & \frac{1}{2^{N}} \sum_{M=0}^{N} \binom{N}{M} Y_{2n}^{(M,N)} \\
& = & \frac{1}{2^{N}} \sum_{M=0}^{N} \binom{N}{M} \sum_{k=0}^{M} \binom{M}{k} 
\frac{(2n)!}{(2n-k)!} B_{2n-k}^{(N-k)} \\
& = & \frac{1}{2^{N}} \sum_{k=0}^{N} \frac{(2n)!}{(2n-k)!} B_{2n-k}^{(N-k)} 
\sum_{M=0}^{N} \binom{M}{k} \binom{N}{M}.
\end{eqnarray*}
\noindent
Now use the basic identity
\begin{equation}
\sum_{M=0}^{N} \binom{M}{k} \binom{N}{M} = \sum_{M=k}^{N} \binom{M}{k} \binom{N}{M} =
2^{N-k} \binom{N}{k}
\end{equation}
\noindent
to obtain the result. 
\end{proof}

Lucas's  identity for generalized Bernoulli numbers is now used to 
obtain a  second expression for the sum $S_{N}(n)$. 

\begin{Prop}
\label{propos-2}
For $2n> N$, the sum $S_{N}(n)$ is given by 
\begin{equation}
S_{N}(n) = \frac{(2n)!}{(2n-N)!} \beta^{2n-N+1} 
\sum_{\ell=0}^{N-1} \binom{N}{\ell+1} \frac{(-1)^{\ell}}{2^{N-1-\ell}} \frac{(\beta+ 1)_{\ell}}{\ell!}.
\end{equation}
\end{Prop}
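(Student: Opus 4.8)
The plan is to combine the two facts already available: the closed form of $S_N(n)$ in terms of generalized Bernoulli numbers from Proposition \ref{propos-1}, namely
\begin{equation*}
S_{N}(n) = \sum_{k=0}^{N} \frac{(2n)!}{(2n-k)!} 2^{-k} \binom{N}{k} B_{2n-k}^{(N-k)},
\end{equation*}
and Lucas's formula \eqref{lucas-0} applied to each generalized Bernoulli number inside the sum. Because $2n > N$ forces $2n-k \geq N-k$ for every $k$, the hypothesis $m \geq p$ of Theorem \ref{thm-Lucas} holds with $m = 2n-k$ and $p = N-k$, so I may write
\begin{equation*}
B_{2n-k}^{(N-k)} = \frac{(-1)^{N-k-1}}{(N-k-1)!} \frac{(2n-k)!}{(2n-N)!} \beta^{2n-N+1} (1+\beta)(2+\beta)\cdots(N-k-1+\beta).
\end{equation*}

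First I would dispose of the boundary term $k = N$. There $p = N-k = 0$, a case not covered by Lucas's formula; but the generating function \eqref{gen-ber} with exponent $0$ gives $B_{m}^{(0)} = \delta_{m,0}$, and since $2n-N \geq 1$ the quantity $B_{2n-N}^{(0)}$ vanishes. This is the only place where the assumption $2n > N$ is genuinely used, and I expect it to be the one subtle point of the whole argument; the remaining sum then runs effectively over $0 \leq k \leq N-1$, exactly the range appearing in the claim.

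Next I would substitute Lucas's formula term by term and simplify. The factor $(2n-k)!$ cancels against the $1/(2n-k)!$ coming from Proposition \ref{propos-1}, and the exponent $2n-N+1$ of $\beta$ does not depend on $k$, so the common factor $\tfrac{(2n)!}{(2n-N)!}\beta^{2n-N+1}$ can be pulled outside the sum. I note that this factoring is a purely formal polynomial manipulation in $\beta$, carried out \emph{before} the umbral substitution $\beta^{j} \mapsto B_{j}/j$, so it is legitimate. Writing the product $(1+\beta)(2+\beta)\cdots(N-k-1+\beta)$ as the Pochhammer symbol $(\beta+1)_{N-k-1}$ leaves
\begin{equation*}
S_{N}(n) = \frac{(2n)!}{(2n-N)!} \beta^{2n-N+1} \sum_{k=0}^{N-1} 2^{-k} \binom{N}{k} \frac{(-1)^{N-k-1}}{(N-k-1)!} (\beta+1)_{N-k-1}.
\end{equation*}

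Finally, the reindexing $\ell = N-k-1$ converts this into the stated form. One has $2^{-k} = 2^{-(N-1-\ell)} = 1/2^{N-1-\ell}$, the binomial symmetry $\binom{N}{k} = \binom{N}{N-1-\ell} = \binom{N}{\ell+1}$, the sign $(-1)^{N-k-1} = (-1)^{\ell}$, and $(N-k-1)! = \ell!$, while $(\beta+1)_{N-k-1} = (\beta+1)_{\ell}$. Collecting these yields precisely
\begin{equation*}
S_{N}(n) = \frac{(2n)!}{(2n-N)!} \beta^{2n-N+1} \sum_{\ell=0}^{N-1} \binom{N}{\ell+1} \frac{(-1)^{\ell}}{2^{N-1-\ell}} \frac{(\beta+1)_{\ell}}{\ell!}.
\end{equation*}
Apart from the vanishing of the $p=0$ boundary term, every step is routine index bookkeeping, so the main obstacle is simply to organize the substitution cleanly and to verify that the hypothesis $2n>N$ is invoked at exactly the right moment.
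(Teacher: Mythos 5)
Your proposal is correct and follows essentially the same route as the paper: start from Proposition \ref{propos-1}, discard the $k=N$ term via $B_{2n-N}^{(0)}=0$ (which is where $2n>N$ is used), substitute Lucas's formula written with the Pochhammer symbol $(\beta+1)_{N-k-1}$, pull out the common factor $\tfrac{(2n)!}{(2n-N)!}\beta^{2n-N+1}$, and reindex. The only difference is that you spell out the reindexing $\ell=N-k-1$ explicitly, which the paper compresses into ``that reduces to the stated form.''
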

\begin{proof}
Using the Pochhammer symbol 
\begin{equation}
(\beta+1)_{p-1} = \frac{\Gamma(\beta+p)}{\Gamma(\beta+1)} = (\beta+1) \cdots (\beta +p-1)
\end{equation}
\noindent
Lucas's  formula \eqref{lucas-0} is stated in the form 
\begin{equation}
B_{n}^{(p)} = \frac{(-1)^{p-1}}{(p-1)!} \frac{n!}{(n-p)!} \beta^{n-p+1} ( \beta + 1)_{p-1}.
\end{equation}
\noindent
Using Proposition \ref{propos-1} and $B_{n}^{(0)} = \delta_{n}$ so that $B_{2n-N}^{(0)} = 0$ 
since $2n > N$, it follows that 
\begin{eqnarray*}
S_{N}(n) & = &  \sum_{k=0}^{N-1} \frac{(2n)!}{(2n-k)!} 2^{-k} \binom{N}{k} 
\frac{(-1)^{N-k-1}}{(N-k-1)!} \frac{(2n-k)!}{(2n-N)!} \beta^{2n-N + 1} (\beta+1)_{N-k-1} \\
& = & \frac{(2n)!}{(2n-N)!} \beta^{2n-N+1} \sum_{k=0}^{N-1} 2^{-k} \binom{N}{k}  
\frac{(-1)^{N-k-1}}{(N-k-1)!} (\beta + 1)_{N-k-1} 
\end{eqnarray*}
\noindent 
that reduces to the stated form.
\end{proof}

To obtain a hypergeometric form of the sum $S_{N}(n)$, observe that 
\begin{equation}
N(1 -N)_{\ell} = (-1)^{\ell} \frac{N!}{(N- \ell -1)!}
\end{equation}
and $(2)_{\ell} = ( \ell + 1)!$ give
\begin{equation}
(-1)^{\ell} \binom{N}{\ell+1} = N \frac{(1- N)_{\ell}}{(2)_{\ell}},
\end{equation}
\noindent
and the following result follows from Proposition \ref{propos-2}.

\begin{Prop}
The hypergeometric form of the sum $S_{N}(n)$ is given by
\begin{equation}
S_{N}(n) = \frac{(2n)!}{(2n-N)!} \beta^{2n-N+1} 2^{1-N} N  \, \, 
\pFq21{1-N, \quad1+\beta}{2}{2}.
\end{equation}
\end{Prop}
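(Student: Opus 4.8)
The plan is to feed the two elementary Pochhammer identities displayed immediately before the statement into the finite sum of Proposition \ref{propos-2} and simply read off a terminating Gauss series. First I would rearrange the power of two inside the summand of Proposition \ref{propos-2}, writing $2^{-(N-1-\ell)} = 2^{1-N}\,2^{\ell}$. This pulls the constant $2^{1-N}$ out in front of the sum and leaves a clean factor $2^{\ell}$ inside, which is precisely the shape needed for a hypergeometric argument $z=2$.

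Next I would eliminate the binomial coefficient in favor of Pochhammer symbols. The identity $N(1-N)_{\ell} = (-1)^{\ell} N!/(N-\ell-1)!$ is checked by writing $(1-N)_{\ell} = (1-N)(2-N)\cdots(\ell-N) = (-1)^{\ell}(N-1)(N-2)\cdots(N-\ell)$ and multiplying by $N$; together with $(2)_{\ell} = (\ell+1)!$ this gives $(-1)^{\ell}\binom{N}{\ell+1} = N(1-N)_{\ell}/(2)_{\ell}$, exactly as recorded in the text. Substituting this expression into the sum turns each summand into $N\,(1-N)_{\ell}(\beta+1)_{\ell}\,2^{\ell}/[(2)_{\ell}\,\ell!]$, with the scalar $N$ pulled to the front alongside $2^{1-N}$.

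At this point the remaining sum is manifestly the partial sum of $\sum_{\ell\ge 0}(1-N)_{\ell}(1+\beta)_{\ell}\,2^{\ell}/[(2)_{\ell}\,\ell!]$, the defining series for $\pFq21{1-N,\ 1+\beta}{2}{2}$. Since $(1-N)_{\ell}$ vanishes for $\ell\ge N$, the series terminates at $\ell=N-1$, so the finite sum over $0\le \ell\le N-1$ in Proposition \ref{propos-2} coincides with the full hypergeometric series and no truncation is lost. Restoring the prefactor $(2n)!/[(2n-N)!]\,\beta^{2n-N+1}$ inherited from Proposition \ref{propos-2} then yields the claimed formula.

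I expect no substantive obstacle here: once the two Pochhammer identities are in hand, the derivation is pure bookkeeping. The only points that deserve a moment of care are the correct splitting of the power of two (so that the $2^{\ell}$ lands inside the series and $2^{1-N}$ outside) and the observation that the upper limit $N-1$ is forced by the zero of $(1-N)_{\ell}$, which is what legitimizes writing the answer as an unrestricted ${}_2F_1$.
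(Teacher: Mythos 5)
Your proposal is correct and follows exactly the paper's route: the paper likewise derives this proposition by substituting the identity $(-1)^{\ell}\binom{N}{\ell+1} = N(1-N)_{\ell}/(2)_{\ell}$ (with $(2)_{\ell}=(\ell+1)!$) into the sum of Proposition \ref{propos-2} and recognizing the terminating Gauss series at argument $2$. Your bookkeeping of the factor $2^{1-N}\,2^{\ell}$ and the observation that $(1-N)_{\ell}$ truncates the series at $\ell=N-1$ are precisely the details the paper leaves implicit.
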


The final form of the sum $S_{N}(n)$ involves the Meixner-Pollaczek polynomials 
defined by 
\begin{equation}
P_{n}^{(\lambda)}(x;\phi) = \frac{(2 \lambda)_{n}}{n!} e^{\imath n \phi} 
\pFq21{-n, \quad \lambda + \imath x}{2 \lambda}{1 - e^{-2 \imath \phi}}. 
\end{equation}
\noindent
Choosing $\lambda = 1$ and $\phi = \pi/2$ gives the next result. 

\begin{theorem}
The sum $S_{N}(n)$ is given by
\begin{equation}
S_{N}(n) = \frac{(2n)!}{(2n-N)!} \frac{1}{( 2 \imath)^{N-1}} \beta^{2n-N+1} 
P_{N-1}^{(1)} \left( - \imath \beta; \frac{\pi}{2} \right). 
\label{sumN-hyper}
\end{equation}
\end{theorem}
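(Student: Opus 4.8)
The plan is to read the final formula directly off the hypergeometric expression in the preceding Proposition, by specializing the definition of the Meixner--Pollaczek polynomial at the parameters $\lambda = 1$ and $\phi = \pi/2$. No new machinery is required: the whole argument is a matching of parameters followed by elementary bookkeeping of the prefactors. I would treat the symbolic variable $\beta$ throughout as a formal indeterminate (the replacement $\beta^{j}\to B_{j}/j$ being performed only at the very end), so the substitution $x = -\imath\beta$ is legitimate at the level of the umbral formalism.

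First I would substitute $\lambda = 1$, $\phi = \pi/2$, $x = -\imath\beta$, and $n = N-1$ into the definition
\begin{equation*}
P_{n}^{(\lambda)}(x;\phi) = \frac{(2\lambda)_{n}}{n!} e^{\imath n \phi}
\pFq21{-n, \quad \lambda + \imath x}{2\lambda}{1 - e^{-2\imath\phi}}
\end{equation*}
and verify term by term that each ingredient reproduces a factor in the hypergeometric form of $S_{N}(n)$. Concretely: the prefactor is $(2)_{N-1}/(N-1)! = N!/(N-1)! = N$; the exponential is $e^{\imath(N-1)\pi/2} = \imath^{N-1}$; the argument becomes $1 - e^{-\imath\pi} = 2$, matching the $2$ in $\pFq21{1-N,\ 1+\beta}{2}{2}$; the lower parameter is $2\lambda = 2$; the upper parameter is $\lambda + \imath x = 1 + \imath(-\imath\beta) = 1+\beta$; and the top parameter is $-n = 1-N$. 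This yields
\begin{equation*}
P_{N-1}^{(1)}\!\left(-\imath\beta;\tfrac{\pi}{2}\right)
= N\,\imath^{N-1}\,\pFq21{1-N, \quad 1+\beta}{2}{2}.
\end{equation*}

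Next I would solve this relation for the ${}_2F_1$ and insert it into the expression $S_{N}(n) = \frac{(2n)!}{(2n-N)!}\beta^{2n-N+1}\,2^{1-N}\,N\,\pFq21{1-N,\ 1+\beta}{2}{2}$ supplied by the preceding Proposition. The factor $N$ cancels against the $N$ coming from the polynomial prefactor, and merging the surviving constants via $2^{1-N}\imath^{-(N-1)} = (2\imath)^{1-N}$ gives precisely \eqref{sumN-hyper}.

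I expect no genuine obstacle here; the only point requiring attention is the constant bookkeeping. In particular one must confirm that $1 - e^{-2\imath\phi}$ evaluates exactly to $2$ at $\phi = \pi/2$ (so that no Pfaff- or Euler-type transformation of the ${}_2F_1$ is forced, and the hypergeometric arguments literally coincide), and that the powers of $2$ and of $\imath$ combine cleanly into the single factor $(2\imath)^{1-N}$. Since $\beta$ plays the role of the formal indeterminate that is only later interpreted umbrally, no further subtlety arises from the symbolic substitution, and the identity follows.
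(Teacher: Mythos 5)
Your proposal is correct and is essentially identical to the paper's own argument: the paper also obtains the theorem by specializing the Meixner--Pollaczek definition at $\lambda = 1$, $\phi = \pi/2$, $x = -\imath\beta$ and matching it against the hypergeometric form $S_{N}(n) = \frac{(2n)!}{(2n-N)!}\beta^{2n-N+1}2^{1-N}N\,\pFq21{1-N,\ 1+\beta}{2}{2}$ from the preceding proposition. Your write-up simply makes explicit the parameter bookkeeping that the paper leaves implicit.
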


Some examples are presented next. 

\begin{Example}
The Meixner-Pollaczek polynomial
\begin{equation}
P_{2}^{(1)} \left( x; \frac{\pi}{2} \right) = 2x^{2}-1
\end{equation}
\noindent
gives 
\begin{eqnarray*}
S_{3}(n) & = & \frac{(2n)!}{(2n-3)!} \times ( -1/4) \beta^{2n-2}(-2 \beta^{2} -1) \\
 & = & \frac{(2n)(2n-1)(2n-2)}{4} \left[ 2 \frac{B_{2n}}{2n} + \frac{B_{2n-2}}{2n-2} \right] \\
 & = & (2n-1)(n-1)B_{2n} + \tfrac{1}{2} n(2n-1)B_{2n-2},
 \end{eqnarray*}
 \noindent
 which coincides with  \cite[eq. (2.6)]{dilcher-1996a}. 
 \end{Example}
 
 \begin{Example}
 The Meixner-Pollaczek of degree $3$ is 
 \begin{equation}
 P_{3}^{(1)} \left(x; \frac{\pi}{2} \right) = \frac{4}{3} ( - 2x + x^{3} )
 \end{equation}
 \noindent 
 that produces 
 \begin{eqnarray*}
 S_{3}(n) & = & \frac{(2n)!}{(2n-4)!} \frac{1}{(2 \imath)^{3}} \beta^{2n-3} 
 \frac{4}{3} ( 2 \imath \beta + \imath \beta^{3}) \\
 & = & - \tfrac{1}{3}(2n-1)(n-1)(2n-3)B_{2n} - \tfrac{1}{3} (2n)(2n-1)(2n-3)B_{2n-2},
 \end{eqnarray*}
 \noindent
 which coincides with  \cite[eq. (2.7)]{dilcher-1996a}. 
 \end{Example}
 
 The next step is to establish a correspondence between the \textit{Dilcher 
 coefficients} $b_{k}^{(N)}$ in \eqref{dilcher-coeff} and the coefficients $p_{k}^{(n)}$ 
 in 
 \begin{equation}
 P_{n}^{(1)}(x;\pi/2) = \sum_{k=0}^{n}p_{k}^{(n)}x^{k}
 \end{equation}
 the Meixner-Pollaczek polynomials.  In particular, it is shown that 
 the recurrence \eqref{dilcher-rec} 
 is a consequence of the classical three terms recurrence for orthogonal 
 polynomials.

\begin{theorem}
The coefficients $b_{k}^{(N)}$ defined in \eqref{dilcher-coeff} and the coefficients 
$p_{k}^{(n)}$ are related by 
\begin{equation}
b_{k}^{(N)} = \frac{(-1)^{N-1-k}}{2^{N-1}} p_{N-1-2k}^{(N-1)}.
\label{rel-b-p}
\end{equation}
\noindent 
The recurrence relation \eqref{dilcher-rec} is equivalent to the three-terms recurrence 
\begin{equation}
(n+1) P_{n+1}^{(1)} \left( x; \frac{\pi}{2} \right) - 2x P_{n}^{(1)} \left( x; \frac{\pi}{2} \right) +
(n+1) P_{n-1}^{(1)}\left(x; \frac{\pi}{2} \right) = 0.
\label{three-rec}
\end{equation}
\noindent
satisfied by the Meixner-Pollaczek polynomials.
\end{theorem}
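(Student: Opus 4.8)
The plan is to prove the two assertions in sequence: first derive the coefficient identity \eqref{rel-b-p} by comparing the two available closed forms for $S_N(n)$, and then use \eqref{rel-b-p} to transport the three-term recurrence \eqref{three-rec} onto the coefficients $b_k^{(N)}$.

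First I would expand the Meixner-Pollaczek polynomial appearing in the main evaluation \eqref{sumN-hyper}. Writing $P_{N-1}^{(1)}(x;\pi/2) = \sum_{j=0}^{N-1} p_j^{(N-1)} x^j$ and substituting $x = -\imath\beta$ gives
\[
S_N(n) = \frac{(2n)!}{(2n-N)!}\,\frac{1}{(2\imath)^{N-1}} \sum_{j=0}^{N-1} p_j^{(N-1)}\,(-\imath)^j\,\beta^{2n-N+1+j}.
\]
On the other hand Dilcher's form \eqref{dilcher-coeff}, rewritten with the umbral rule $\beta^{2n-2k} = B_{2n-2k}/(2n-2k)$, reads $S_N(n) = \frac{(2n)!}{(2n-N)!}\sum_{k} b_k^{(N)}\beta^{2n-2k}$. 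Since both identities hold for every $n$ with $2n>N$, the coefficients of each power of $\beta$ must agree, and matching $\beta^{2n-2k}$ against $\beta^{2n-N+1+j}$ forces $j = N-1-2k$. Only these powers survive because $P_{N-1}^{(1)}(\cdot;\pi/2)$ has the parity of $N-1$, so $p_j^{(N-1)} = 0$ unless $N-1-j$ is even; this is the base-case-plus-recurrence observation that \eqref{three-rec} preserves parity. Equating the two coefficients and simplifying the factor $(-\imath)^{N-1-2k}/(2\imath)^{N-1}$, using $\imath^2=-1$ together with $(-1)^{N-1+k} = (-1)^{N-1-k}$, yields exactly \eqref{rel-b-p}.

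With \eqref{rel-b-p} in hand I would next read off the coefficient of $x^j$ in the three-term recurrence \eqref{three-rec}, obtaining
\[
(n+1)\,p_j^{(n+1)} - 2\,p_{j-1}^{(n)} + (n+1)\,p_j^{(n-1)} = 0.
\]
The substitution $n = N-1$, $j = N-2k$ turns the three coefficients here into $p_{N-2k}^{(N)}$, $p_{N-1-2k}^{(N-1)}$, and $p_{N-2k}^{(N-2)}$, which by \eqref{rel-b-p} are proportional to $b_k^{(N+1)}$, $b_k^{(N)}$, and $b_{k-1}^{(N-1)}$ respectively. Inserting these and dividing through by the common factor $(-1)^{N-1-k}2^{N-2}$ collapses the relation to $-4N\,b_k^{(N+1)} - 4\,b_k^{(N)} + N\,b_{k-1}^{(N-1)} = 0$, which rearranges precisely into Dilcher's recurrence \eqref{dilcher-rec}. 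Every step is reversible, so the two recurrences are equivalent.

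I expect the only genuine obstacle to be bookkeeping: keeping the powers of $2$, the factors of $\imath$, and the sign $(-1)^{N-1\mp k}$ straight through both substitutions, and aligning the index ranges so that the shift $j = N-2k$ lands on the correct $b$-indices. The parity remark is what guarantees that no spurious odd-offset powers of $\beta$ intrude, so I would state it explicitly rather than leave it implicit.
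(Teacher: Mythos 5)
Your proposal is correct and follows essentially the same route as the paper: equation \eqref{rel-b-p} is obtained by comparing the two expressions for $S_{N}(n)$ in \eqref{dilcher-coeff} and \eqref{sumN-hyper}, and Dilcher's recurrence \eqref{dilcher-rec} then follows by substituting the inverted relation into the coefficient-level form \eqref{rec-mp} of the three-term recurrence \eqref{three-rec}. The only difference is that you spell out the bookkeeping the paper leaves implicit --- the parity of $P_{N-1}^{(1)}(x;\pi/2)$, the index shift $j=N-1-2k$, and the simplification of $(-\imath)^{N-1-2k}/(2\imath)^{N-1}$ --- all of which you carry out correctly.
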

\begin{proof}
The Meixner-Pollaczek polynomials are orthogonal, hence they satisfy a three-terms 
recurrence. The specific form for this family in  \eqref{three-rec} appears 
in \cite[Chapter 18]{olver-2010a}. In terms of its coefficients $p_{k}^{(n)}$ this is 
expressed as 
\begin{equation}
(n+1)p_{k}^{(n+1)} - 2 p_{k-1}^{(n)} + (n+1)p_{k}^{(n-1)} = 0.
\label{rec-mp}
\end{equation}
\noindent
Comparing the two expressions for $S_{N}(n)$ in \eqref{dilcher-coeff} and \eqref{sumN-hyper} gives 
\eqref{rel-b-p}. This is equivalent to 
\begin{equation}
p_{\ell}^{(N-1)} = 2^{N-1}\imath^{N-1+\ell} b_{\tfrac{1}{2}(N-1-\ell)}^{(N)}.
\end{equation}
\noindent
Replacing in \eqref{rec-mp} and simplifying yields \eqref{dilcher-rec}.
\end{proof}

Theorem 2 in \cite{dilcher-1996a}, stated below,  may be proven along the same lines of the proof 
of Theorem \ref{the-stir1}. Details are omitted.

\begin{theorem}
If $2n \leq N-1$, then 
\begin{eqnarray}
S_{N}(n) & = &  (-1)^{n} \frac{(2n)! (N-2n-1)!}{2^{N-1}} p_{N-2n-1}^{(N-1)} \\
 & = & (-1)^{N-1} (2n)! (N-2n-1)! b_{n}^{(N)}. \nonumber 
\end{eqnarray}
\end{theorem}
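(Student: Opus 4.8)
The plan is to mimic the proof of Theorem~\ref{the-stir1}: I take the closed form \eqref{sumN-hyper} for $S_N(n)$, which was derived under the hypothesis $2n>N$, and evaluate it formally in the complementary range $2n\le N-1$, where the prefactor $(2n)!/(2n-N)!$ degenerates. Writing that prefactor as the falling factorial $(2n)(2n-1)\cdots(2n-N+1)=\prod_{i=0}^{N-1}(2n-i)$, one sees immediately that for $2n\le N-1$ it contains the factor $(2n-2n)=0$ and so vanishes identically; the nonzero value of $S_N(n)$ must therefore arise from a compensating singularity in the symbolic factor $\beta^{2n-N+1}P_{N-1}^{(1)}(-\imath\beta;\pi/2)$, precisely as in the passage from \eqref{lucas-0} to Theorem~\ref{the-stir1}.

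First I would expand $P_{N-1}^{(1)}(-\imath\beta;\pi/2)=\sum_{j=0}^{N-1}p_j^{(N-1)}(-\imath)^j\beta^j$, turning \eqref{sumN-hyper} into a sum of terms proportional to $\tfrac{(2n)!}{(2n-N)!}p_j^{(N-1)}(-\imath)^j\beta^{2n-N+1+j}$. Setting $i=N-1-j$, the exponent of $\beta$ in the $j$-th term is $2n-i$, and the umbral rule $\beta^{m}=B_m/m$ contributes the factor $B_{2n-i}/(2n-i)$. The crucial algebraic step is to cancel, \emph{before} any numerical substitution, the single factor $(2n-i)$ of the falling factorial against this denominator; this reduces the $j$-th term to $B_{2n-i}\prod_{i'\neq i}(2n-i')$ times the accompanying constants. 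Since $2n\in\{0,\dots,N-1\}$, the surviving product $\prod_{i'\neq i}(2n-i')$ still carries the zero factor $(2n-2n)$ whenever $i\neq 2n$, so every term vanishes except the one with $i=2n$, that is $j=N-1-2n$. For that term $B_{2n-i}=B_0=1$ and $\prod_{i'\neq 2n}(2n-i')=(2n)!\,(-1)^{N-2n-1}(N-2n-1)!$.

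Collecting the constants of the surviving term is then routine bookkeeping: combining $1/(2\imath)^{N-1}$, $(-\imath)^{N-1-2n}$, the sign $(-1)^{N-2n-1}$ from the leftover product and the factorials, and using $(-1)^{2(N-1-2n)}=1$ together with $\imath^{-2n}=(-1)^n$, all powers of $\imath$ and of $-1$ collapse to a single $(-1)^n/2^{N-1}$. This yields the first asserted identity, $S_N(n)=(-1)^n\tfrac{(2n)!(N-2n-1)!}{2^{N-1}}p_{N-2n-1}^{(N-1)}$. The second identity is immediate from the relation \eqref{rel-b-p} established earlier: taking $k=n$ gives $p_{N-2n-1}^{(N-1)}=2^{N-1}(-1)^{N-1-n}b_n^{(N)}$, and substituting this into the first identity and using $(-1)^n(-1)^{N-1-n}=(-1)^{N-1}$ produces $S_N(n)=(-1)^{N-1}(2n)!(N-2n-1)!\,b_n^{(N)}$.

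The main obstacle is conceptual rather than computational: one must justify the $0\times\infty$ mechanism, i.e.\ that the formally divergent term $B_0/0$ is to be paired with the vanishing factor $(2n-2n)$ of the falling factorial before evaluation, and that the terms with $\beta^{m}$ for $m<0$ drop out rather than forcing us to assign values to negatively indexed Bernoulli numbers. As in Theorem~\ref{the-stir1}, this is resolved by fixing the order of operations: performing the cancellation $\tfrac{1}{2n-i}\cdot(2n-i)=1$ algebraically first, so that no $B_m$ with $m<0$ is ever evaluated---each non-surviving term retains an explicit zero factor and is discarded---while the unique surviving term involves only $B_0=1$. Once this ordering is in place, the remaining steps are the elementary manipulations indicated above.
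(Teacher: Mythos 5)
Your proposal is correct and follows exactly the route the paper itself prescribes: the paper's ``proof'' of this theorem is only the remark that it ``may be proven along the same lines of the proof of Theorem~\ref{the-stir1}, details omitted,'' and your argument supplies precisely those details --- extending \eqref{sumN-hyper} formally to $2n\le N-1$, isolating the single surviving term $j=N-1-2n$ via the $0\times\infty$ cancellation, and recovering the second identity from \eqref{rel-b-p} with $k=n$. The constant bookkeeping ($i^{-2n}=(-1)^n$, the two factors $(-1)^{N-1-2n}$ cancelling, and the leftover product $(2n)!\,(-1)^{N-1-2n}(N-2n-1)!$) checks out, so nothing further is needed.
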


\medskip

\noindent
{\bf Acknowledgments}. 
The work of the first author was partially funded by
$\text{NSF-DMS } 1112656$. \\

\bigskip

\end{document}